\documentclass{amsart}
\usepackage[utf8]{inputenc}
\usepackage[margin=1in]{geometry}  
\usepackage{graphicx}              
\usepackage{amsmath}               
\usepackage{amsfonts}              
\usepackage{amsthm}                
\usepackage{amssymb}
\usepackage{shuffle}                 
\usepackage{tikz}  
\usetikzlibrary{shapes}
\usetikzlibrary{positioning}
\usepackage{ytableau}    
\usepackage{xcolor}
\usepackage{float}
\usepackage{pifont}

\newtheorem{thm}{Theorem}[section]
\newtheorem{lem}[thm]{Lemma}
\newtheorem{prop}[thm]{Proposition}
\newtheorem{cor}[thm]{Corollary}

\newtheorem{defn}[thm]{Definition}
\newtheorem{rmk}[thm]{Remark}

\numberwithin{thm}{section}


\newcommand{\Des}{\mathrm{Des}}
\newcommand{\Peak}{\mathrm{Peak}}
\newcommand{\Spike}{\mathrm{Spike}}
\newcommand{\Valley}{\mathrm{Valley}}

\newcommand{\mk}{\mathrm{Mark}}
\newcommand{\flip}{\mathrm{fl}}

\newcommand{\Sy}{\mathfrak{S}}
\newcommand{\By}{\mathfrak{B}}
\newcommand{\init}[2]{#2|_{#1}}
\newcommand{\cmark}{\color{green!50!black}\ding{51}}%
\newcommand{\xmark}{\color{red!80!black}\ding{55}}

\title{Connecting descent and peak polynomials}
\author[Kantarci Oguz]{Ezgi Kantarci O\u{g}uz}
\address{Department of Mathematics, University of Southern California, 3620 South Vermont Avenue, Los Angeles, CA 90089-2532, U.S.A.}
\email{kantarci@usc.edu}

\begin{document}

\begin{abstract}
   A permutation $\sigma=\sigma_1 \sigma_2 \cdots \sigma_n$ has a descent at $i$ if $\sigma_i>\sigma_{i+1}$. A descent $i$ is called a peak if $i>1$ and $i-1$ is not a descent. The size of the set of all permutations of $n$ with a given descent set is a polynomials in $n$, called the descent polynomial. Similarly, the size of the set of all permutations of $n$ with a given peak set, adjusted by a power of $2$ gives a polynomial in $n$, called the peak polynomial. In this work we give a unitary expansion of descent polynomials in terms of peak polynomials. Then we use this expansion to give a combinatorial interpretation of the coefficients of the peak polynomial in a binomial basis, thus giving a new proof of the peak polynomial positivity conjecture. 
\end{abstract}
\ytableausetup{notabloids}

\maketitle
\section{Introduction}

Denote by $\Sy_n$ the symmetric group of permutations $\sigma=\sigma_1 \sigma_2 \cdots \sigma_n$ of $[n]=\{1,2,\ldots,n\}$ written in one-line notation. We will draw the graph of $\sigma$ by plotting points $(i,\sigma_i)$ and connecting consecutive points. 

We define the descent set of $\sigma$ as follows:
$$\Des(\sigma)=\{i \mid \sigma_i>\sigma_{i+1}\}\subset [n-1].$$

Note that the descents mark the beginnings of the intervals where the graph is decreasing, as seen in Figure \ref{fig:des} below.

\begin{figure}[h] \label{fig:des}
\begin{tikzpicture}[node distance=.1mm and .1mm,
reg/.style={circle,inner sep=0pt,minimum size=2pt,fill=black},
des/.style={star,star points=5,inner sep=0pt,minimum size=8pt, fill=blue},
peak/.style={regular polygon,regular polygon sides=3,inner sep=0pt,minimum size=6pt, fill=blue}
valley/.style={regular polygon,regular polygon sides=3,inner sep=0pt,minimum size=6pt, fill=green, rotate=180}]

\node(1) [reg] at (1,.4) {}; 
\node [above=of 1] {2};
\node(2) [des] at (2.5,.8) {};
\node [above=of 2] {4};
\node(3) [des] at (4,.6) {}; 
\node [above=of 3] {3};
\node(4) [reg] at (5.5,.2) {};
\node [above=of 4] {1};
\node(5) [reg] at (7,1) {}; 
\node [above=of 5] {5};
\node(6) [reg] at (8.5,1.2) {};
\node [above=of 6] {6};
\node(7) [reg] at (10,1.4) {}; 
\node [above=of 7] {7};
\node(8) [reg] at (11.5,1.6) {};
\node [above=of 8] {8};
\draw[dashed] (1)--(2)--(3)--(4)--(5)--(6)--(7)--(8);
\end{tikzpicture}
\caption{The graph of $\sigma=24315678$ with descents marked in blue.}
\end{figure}

For a given set $S$ and $n>\mathrm{max}(S)$, we let $D(S,n)$ be the set of all permutations in $\Sy_n$ with descent set $S$, and put $d(S,n)=|D(S,n)|$.  In 1915, it was shown by MacMahon \cite{macm} that this is a polynomial in $n$. More recently, Diaz-Lopez et al. \cite{descent} proved this polynomial expands into the binomial basis around $n-m$, where $m=\mathrm{max}(S)$, and gave a combinatorial interpretation for the coefficients. Using the notation $\init{i}{\sigma}= \{\sigma_1,\sigma_2, \ldots, \sigma_i\}$, we give a version of their result slightly altered to include the case $m>\mathrm{max}(S)$ as follows:
\begin{thm}[\cite{descent}] \label{thm:des}
For any finite set of positive integers $S$ with $\mathrm{max}(S)\leq m$ we have:
\begin{equation}
   \displaystyle d(S,n)=a_0(S){n-m\choose0}+a_1(S){n-m\choose1}+\cdots+a_m(S){n-m\choose m},
\end{equation}
where the constant $a_k(S)$ is the number of $\sigma \in D(S,2m)$ such that:
\begin{equation*}
   \init{m}{\sigma} \cap [m + 1, 2m] = [m + 1, m + k].
\end{equation*}
\end{thm}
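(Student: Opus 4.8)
The plan is to prove this by establishing the polynomial identity through a combinatorial counting argument, using the fact that the binomial basis $\{\binom{n-m}{k}\}_{k=0}^{m}$ spans the space of polynomials of degree at most $m$, so it suffices to verify the identity at $m+1$ distinct values of $n$. More precisely, I would exploit the standard fact that a polynomial of degree at most $m$ is determined by its values, and the coefficients in the binomial basis are recovered by a finite difference / inclusion-exclusion formula. Since $\binom{n-m}{k}$ vanishes for $n = m, m+1, \dots, m+k-1$ and equals $1$ at $n = m+k$, evaluating the right-hand side at $n = m+j$ isolates the partial sum $\sum_{k=0}^{j} a_k(S)\binom{j}{k}$, which by the Vandermonde-type relation reduces to a clean count. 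Thus the core task is to show that $a_k(S)$, as defined, produces exactly these evaluations.

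First I would reinterpret $a_k(S)$ as counting permutations in $D(S, 2m)$ according to how many of the ``large'' values $\{m+1, \dots, 2m\}$ appear among the first $m$ positions, with the crucial constraint that they appear as an initial segment $[m+1, m+k]$. The strategy is to build a bijection, or a weighted counting correspondence, between permutations in $D(S,n)$ for general $n$ and the structured objects counted by the $a_k(S)$. The natural idea is to take a permutation $\sigma \in D(S,n)$, look at its first $m$ entries (which determine the descent set entirely since $\max(S) \le m$, so positions beyond $m$ form an increasing run that is forced), and record the relative order of these entries together with a choice of which values are ``small'' versus ``large.'' Because the descent structure lives entirely in the first $m$ positions, the tail $\sigma_{m+1} < \cdots < \sigma_n$ is determined once we know the set $\init{m}{\sigma}$, so $d(S,n)$ factors as a sum over admissible initial segments, each contributing a count of ways to fill the increasing tail — and this count is precisely a binomial coefficient in $n$.

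The key step is to make the ``standardization'' rigorous: given the first $m$ entries of a permutation of $[n]$ with descent set $S$, I would standardize them against a fixed universe of size $2m$, sending the $j$ values among $\init{m}{\sigma}$ that exceed $m$ to the top block $[m+1, m+k]$ and the rest to $[1,m]$, thereby producing a representative in $D(S,2m)$. I expect the correspondence to show that the number of $\sigma \in D(S,n)$ whose first $m$ entries use exactly $k$ values larger than $m$ (relative to the $n$-permutation) grows like $\binom{n-m}{k}$ because we freely choose which $k$ of the remaining $n-m$ large values occupy those slots, while the relative pattern is pinned down by the representative counted in $a_k(S)$. Summing over $k$ then yields the stated binomial expansion.

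The hard part will be handling the bookkeeping of the standardization correspondence so that it genuinely respects the descent set and so that the ``initial segment'' condition $\init{m}{\sigma} \cap [m+1, 2m] = [m+1, m+k]$ emerges naturally rather than being imposed artificially; in particular, I would need to check that choosing the large values to be an initial block $[m+1, m+k]$ rather than an arbitrary $k$-subset does not overcount or undercount, which amounts to verifying that the descent pattern depends only on the relative order of entries and that the canonical choice of block is a valid section of the standardization map. A secondary subtlety is confirming that the degree of $d(S,n)$ in $n$ is at most $m$ so that the $m+1$ evaluations determine the polynomial uniquely; this follows from MacMahon's polynomiality together with a degree bound, which I would verify by noting that the leading behavior is controlled by the number of free large values, capped at $m$.
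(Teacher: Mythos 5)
Your core argument (the second and third paragraphs) is exactly the approach the paper sketches, citing Diaz-Lopez et al.: partition $D(S,n)$ according to $A=\init{m}{\sigma}\cap[m+1,n]$, replace $A$ order-preservingly by the initial block $[m+1,m+k]$ to obtain a representative counted by $a_k(S)$ (the small values staying fixed and the increasing tail being forced), and note that each representative arises from exactly $\binom{n-m}{k}$ choices of $A$. The interpolation/finite-difference scaffolding in your first paragraph is superfluous once this correspondence is in place, since the counting identity then holds verbatim for every $n\geq m$ and extends to all $n>\mathrm{max}(S)$ by MacMahon's polynomiality, but it introduces no error.
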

The underlying idea is quite elegant and will be useful when we are proving a similar result for peak polynomials. Simply put, if $\sigma \in D(S,n)$ has $\{\sigma_1,\sigma_2, \ldots, \sigma_m\} \cap [m + 1, n]=A$ for some $k$ element set $A$, and $B$ is any other $k$ element subset of $[m + 1, n]$, then exchanging elements of $A$ and $B$ while preserving the orders gives another permutation with descent set $S$. Therefore, for each $k$, it is enough to count for the simplest $k$-element subset of $[m+1,n]$ and multiply with ${n-m}\choose{k}$.

For example, there are $3$ elements in $D(\{2,3\},n)$ satisfying   $ \init{m}{\sigma} \cap [5, 8] = \varnothing$ : $14325678, 24315678,34215678$. So we have $a_0(\{2,3\})=3$ for $m=4$. Calculating the other coefficients similarly, we obtain:
\begin{equation}\label{eq:des23}
    \displaystyle d(\{2,3\},n)=3{n-4\choose0}+8{n-4\choose1}+7{n-4\choose2}+2{n-4\choose 3}+0{n-4\choose 4}.   
\end{equation}

Another well studied permutation statistic is given by peak point. Here we define the peak points and their counterpart valley points of the partition to be the points higher and lower than their neighbors respectively:  

$$\Peak(\sigma)=\{\sigma\mid \sigma_i>\sigma_{i+1}, \sigma_{i-1}\}\subset[n-1]/\{1\},$$
$$\Valley(\sigma)=\{\sigma\mid \sigma_i<\sigma_{i+1}, \sigma_{i-1}\}\subset[n-1]/\{1\}.$$

The example $\sigma=34215678$ from Figure \ref{fig:des} has $\Peak(\sigma)=\{2\}$ and $\Valley(\sigma)=\{4\}$. We also set $\Spike(\sigma)=\Peak(\sigma)\cup \Valley(\sigma)$ to be the set of all extremal points that are not corner points.

For a given set $I$ and $n>\mathrm{max}(I)$, we let $P(I,n)$ be the set of permutations in $\Sy_n$ with peak set $I$, and set $p(I,n)=2^{-n+|I|+1}|P(I,n)|$. Note that peaks are more restrictive in the sense that $p(I,n)=0$ if $I$ contains $1$ or any consecutive entries. For the rest of this work, we will focus our attention to \emph{admissible} peak sets $I$: $I\subset [n-1]/\{1\}$ such that $i\in I \Rightarrow i+1 \notin I$. 

In \cite{saganpeak} Billey, Burzdy and Sagan proved that $p(I,n)$ is a polynomial in $n$, and conjectured that the coefficients of this polynomial in a binomial basis centered at $\mathrm{max}(I)$ are non-negative. Their conjecture was proved in 2017 by Diaz-Lopez et al. \cite{omar} using the recursion of peaks, without describing the actual coefficients.

In this work, we tie the theory of peak and descent polynomials together by giving a binary expansion of $d(S,n)$ in terms of peak polynomials. We use this expansion to give a description of the peak polynomial coefficients analogous to the one in Theorem \ref{thm:des}. In Section \ref{sec:marked}, we extend our notions of descents and peaks to $\By_n$, the set of marked permutations of $n$ with $2^n n!$ elements. The added exponent of $2$ cancels out with the $2^{-n+|I|+1}$ from the peak polynomial definition, giving us a way to expand descent polynomials in terms of peak polynomials. In Section $\ref{sec:peak}$, we define involutions on permutations that flip the descents on an initial section and we use them to partition permutations with a given descent set to calculate the coefficients for the peak polynomial.

\section{Descents and Peaks of Marked Permutations}\label{sec:marked}

We start our section by tweaking our notation a little bit to express our formulas easier. Note that the peaks and valleys of a permutation only depend on its descent set. In fact for any $S\in[n-1]$, we can talk about the peaks and valleys of $S$:
$$\Peak(S)=\{1<i\leq n-1 \mid i \in S, {i-1}\notin S\},$$
$$\Valley(S)=\{1<i\leq n-1 \mid i \notin S, {i-1}\in S\}.$$
Note that with this notation, $\Peak(\sigma)=\Peak(\Des(\sigma))$ and $\Valley(\sigma)=\Valley(\Des(\sigma))$ as expected. We also set $\Spike(S):=\Peak(S)\cup \Valley(S)$.

Denote by $\By_n$ the set of signed permutations:
$$\By_n:=\{\rho=\rho_1\rho_2\ldots\rho_n \mid \forall i\leq n \exists k: \rho_k=i \text{ or } \rho_k=-i\}.$$ Note that the definitions of descent, peak, spike and valley naturally extend to signed permutations by saying $i$ is a descent of $\rho$ if $\rho_i>\rho_{i+1}$.

\begin{lem}[\cite{ezgi}] Let $\sigma \in S_n$ have $\Peak(\sigma)=I$. Denote by $\mk(\sigma)$ the $2^n$ element subset of $\By_n$ that give $\sigma$ when marks are erased. Then, for all $\rho \in \mk(\sigma)$, $\Spike(\rho) \supset I$. Conversely, for any $S \subset [n-1]$ satisfying $\Spike(S)\supset I$, there are exactly $2^{|I|+1}$ elements in $\mk(\sigma)$ with descent set equal to $S$.
\end{lem}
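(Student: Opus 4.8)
The plan is to index the $2^n$ elements of $\mk(\sigma)$ by sign vectors and to control precisely how each sign flip moves the descent set. Write $\rho=(\varepsilon_1\sigma_1,\dots,\varepsilon_n\sigma_n)$ with $\varepsilon_j\in\{+1,-1\}$ and record it by $x\in\{0,1\}^n$, where $x_j=1$ means $\varepsilon_j=-1$. Everything hinges on one elementary comparison: for a fixed $i\in[n-1]$ I would compare $\varepsilon_i\sigma_i$ against $\varepsilon_{i+1}\sigma_{i+1}$ in the four sign patterns, using that every $\sigma_j>0$, so a positively signed entry always exceeds a negatively signed one. This yields the clean dichotomy: if $i$ is an \emph{ascent} of $\sigma$ then $i\in\Des(\rho)$ exactly when $\rho_{i+1}<0$ (that is $x_{i+1}=1$), whereas if $i$ is a \emph{descent} of $\sigma$ then $i\in\Des(\rho)$ exactly when $\rho_i>0$ (that is $x_i=0$). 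The decisive point is that each such condition constrains a \emph{single} coordinate of $x$.

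For the first assertion, take $j\in I=\Peak(\sigma)$, so $j-1$ is an ascent and $j$ a descent of $\sigma$. The dichotomy gives $j-1\in\Des(\rho)\iff x_j=1$ and $j\in\Des(\rho)\iff x_j=0$, which are complementary; hence for every choice of signs exactly one of $j-1,j$ is a descent of $\rho$, so $j\in\Spike(\rho)$. This proves $\Spike(\rho)\supseteq I$ for all $\rho\in\mk(\sigma)$. (Conceptually, $\sigma_j$ exceeds both neighbours in absolute value, so $\rho_j$ is a strict local extremum of $\rho$ no matter how the signs fall.)

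For the count, fix $S$ with $\Spike(S)\supseteq I$ and set $t_i=1$ if $i\in S$ and $t_i=0$ otherwise. Demanding $\Des(\rho)=S$ turns the dichotomy into unary constraints: each ascent $i$ forces $x_{i+1}=t_i$, and each descent $i$ forces $x_i=1-t_i$. A coordinate $x_j$ is hit by \emph{two} constraints exactly when $j-1$ is an ascent and $j$ a descent, i.e. exactly when $j\in\Peak(\sigma)=I$; there they read $x_j=t_{j-1}$ and $x_j=1-t_j$, which agree iff $t_{j-1}\neq t_j$, i.e. iff $j\in\Spike(S)$. Thus the hypothesis $\Spike(S)\supseteq I$ is precisely the condition making the whole system consistent, so at least one valid sign vector exists, and the number of them equals $2^{f}$ with $f$ the number of coordinates left free.

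It remains to show $f=|I|+1$. A coordinate $x_j$ is free iff it receives no constraint, meaning (for interior $j$) that $j-1$ is a descent and $j$ an ascent --- a valley of $\sigma$ --- while $x_1$ is free iff $\sigma_1<\sigma_2$ and $x_n$ is free iff $\sigma_{n-1}>\sigma_n$. The cleanest way I would count these is to adjoin $\sigma_0=\sigma_{n+1}=+\infty$: then the free coordinates are exactly the valleys of the extended word, whose peaks coincide with the ordinary peaks $I$. Since the extended graph comes down from and returns to $+\infty$, its extrema alternate and both the first and last are valleys, so there is exactly one more valley than peak, giving $f=|I|+1$ and hence exactly $2^{|I|+1}$ sign vectors with $\Des(\rho)=S$. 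I expect the main obstacle to be precisely this boundary bookkeeping --- handling $x_1$ and $x_n$ correctly and justifying the alternation count; the rest reduces to the single local sign comparison applied at each position.
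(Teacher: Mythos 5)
The paper does not actually prove this lemma --- it is imported verbatim from the cited reference \cite{ezgi} --- so there is no in-paper argument to compare against. Judged on its own, your proof is correct and complete. The local sign dichotomy is right (check the four sign patterns: for an ascent $i$ of $\sigma$, $i\in\Des(\rho)$ iff $\rho_{i+1}<0$; for a descent $i$, iff $\rho_i>0$), each position of $[n-1]$ therefore pins exactly one coordinate of the sign vector, and two constraints collide on the same coordinate $x_j$ precisely when $j\in\Peak(\sigma)$, where they are compatible iff $j\in\Spike(S)$. This gives both the containment $\Spike(\rho)\supseteq I$ and the identification of consistency with the hypothesis $\Spike(S)\supseteq I$. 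Your count of free coordinates is also right: the boundary cases ($x_1$ free iff $\sigma_1<\sigma_2$, $x_n$ free iff $\sigma_{n-1}>\sigma_n$) are exactly absorbed by padding with $\sigma_0=\sigma_{n+1}=+\infty$, and the alternation of extrema in a word that descends from $+\infty$ and returns to $+\infty$ forces one more valley than peak, so $f=|I|+1$ and the fiber has size $2^{|I|+1}$. A quick sanity check confirms the bookkeeping: summing $2^{|I|+1}$ over the $S$ with $\Spike(S)\supseteq I$ must return $2^n$, and it does. The one thing I would make explicit in a final write-up is the reduction $\Spike(\rho)=\Spike(\Des(\rho))$, i.e.\ that $j\in\Spike(\rho)$ means exactly one of $j-1,j$ lies in $\Des(\rho)$, since that is the form in which your complementarity argument delivers the first claim.
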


\begin{thm} We have $\displaystyle d(S,n)= \sum_{I\subset \Spike(S)} p(I,n)$. \label{thm:dtop}
\end{thm}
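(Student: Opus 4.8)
The plan is to count the set $\By(S,n) := \{\rho \in \By_n : \Des(\rho) = S\}$ of signed permutations with descent set $S$ in two different ways. Comparing the two counts should yield the identity once a common factor of $2^n$ cancels, which is exactly the cancellation the introduction advertises between the $2^n$ of $\By_n$ and the $2^{-n+|I|+1}$ in the definition of $p(I,n)$.

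For the first count I would stratify $\By_n$ according to the underlying unsigned permutation obtained by erasing marks, that is, $\By_n = \bigsqcup_{\sigma \in \Sy_n} \mk(\sigma)$. Restricting to descent set $S$ and then grouping the $\sigma$ by their peak set $I = \Peak(\sigma)$, the Lemma tells me exactly how many $\rho \in \mk(\sigma)$ satisfy $\Des(\rho) = S$: this count is $2^{|I|+1}$ when $\Spike(S) \supset I$ and $0$ otherwise, the latter because $\Des(\rho)=S$ forces $\Spike(S) = \Spike(\rho) \supset I$ by the first part of the Lemma. Summing over $\sigma$ and then over admissible $I \subset \Spike(S)$ gives
$$|\By(S,n)| = \sum_{I \subset \Spike(S)} |P(I,n)| \cdot 2^{|I|+1}.$$
Substituting $|P(I,n)| = 2^{\,n-|I|-1} p(I,n)$ from the definition of $p(I,n)$ collapses the powers of two and yields $|\By(S,n)| = 2^n \sum_{I \subset \Spike(S)} p(I,n)$. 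Here I would remark that, although the sum is written over all subsets $I$ of $\Spike(S)$, only admissible $I$ contribute, since $p(I,n)=0$ for inadmissible $I$; this is consistent with the index set produced by grouping the $\sigma$ by peak set.

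For the second count I would use the standardization map $\mathrm{st} : \By_n \to \Sy_n$ sending $\rho$ to the unique permutation with the same relative order of entries. Since standardization preserves every strict inequality between consecutive entries, $\Des(\mathrm{st}(\rho)) = \Des(\rho)$, so $\mathrm{st}$ restricts to a map $\By(S,n) \to D(S,n)$. The key point is that every fiber has size exactly $2^n$: a preimage of a fixed $\sigma$ is determined by choosing a sign for each of the magnitudes $1,\dots,n$ (one of $2^n$ sign vectors) and then placing the resulting $n$ signed values into the positions dictated by the order pattern of $\sigma$, and conversely each preimage recovers its sign vector from the signs it assigns to the magnitudes. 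Hence $|\By(S,n)| = 2^n\, d(S,n)$, and equating the two expressions and dividing by $2^n$ gives $d(S,n) = \sum_{I \subset \Spike(S)} p(I,n)$.

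I expect the main obstacle to be conceptual bookkeeping rather than any hard estimate. One must deploy two genuinely different $2^n$-to-one fibrations of $\By_n$ over $\Sy_n$: the mark-erasing (absolute value) map in the first count, where the Lemma controls the fiber sizes refined by descent set, and the order-preserving standardization map in the second count, which is the one that preserves descent sets outright. Keeping the exponents of two aligned through both counts so that the $2^n$ factors cancel cleanly is the delicate part. A secondary point worth checking is that the identity, established for all $n > \mathrm{max}(S)$, upgrades to the claimed polynomial identity; this follows because both sides are polynomials in $n$ agreeing at infinitely many values.
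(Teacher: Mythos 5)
Your proof is correct and follows essentially the same route as the paper: both count signed permutations in $\By_n$ with descent set $S$ in two ways, once via the $2^n$-to-one, descent-preserving standardization map (giving $2^n d(S,n)$) and once by stratifying over the mark-erasing map and applying the Lemma to the fibers $\mk(\sigma)$ grouped by peak set (giving $\sum_{I\subset \Spike(S)} 2^{|I|+1}|P(I,n)| = 2^n\sum_{I\subset \Spike(S)} p(I,n)$). Your writeup just makes explicit the bookkeeping that the paper's terser proof leaves implicit.
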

\begin{proof} As any marking of the images of $1,2,\ldots,n$ is essentially just a reordering of $[n]$, the number of elements of $\By_n$ with a given descent set $S$ is simply $2^n d(S,n)$. Also note that for all $\rho\in \By_n$ with descent set $S$, $\rho$ is in $\mk(\sigma)$ for some $\sigma$ with $P(\sigma)\subset D(I,n)$. \begin{eqnarray*}2^n d(S,n)&=&\sum_{I\subset \Spike(S)} 2^{|I|+1}|\Peak(I,n)|=\sum_{I\subset \Spike(S)} 2^n p(I,n).
\end{eqnarray*}
\end{proof}

For example:
\begin{itemize}
    \item $d(\varnothing,n)=p(\varnothing,n)=1$.
    \item $d(\{1\},n)=p(\{2\},n)+p(\varnothing,n)$. 
    \item For $1<k<n$, $d(\{k\},n)=p(\{k\},n)+p(\{k+1\},n)+p(\varnothing,n)$.
    \item $d(\{k,k+1,\ldots,k+j\},n)=p(\{k,k+j+1\},n)+p(\{k\},n)+p(\{k+j+1\},n)+p(\varnothing,n)$.
\end{itemize}

For any set $I\in [n]/\{1\}$, we will let $S_I$ denote the unique subset of $[n]/\{n\}$ satisfying $\Spike(S_I)=I$, constructed by alternating the elements of $I$ to be peaks and valleys such that the rightmost one is not a peak. For example, for $I=\{2,4\}$ we have $S_I=\{2,3\}$: the descent set with a peak at $2$ and a valley at $4$ (Figure \ref{fig:si}.

\begin{figure}[h] \label{fig:si}
\begin{tikzpicture}[node distance=.1mm and .1mm,
reg/.style={circle,inner sep=0pt,minimum size=3pt,fill=black},
des/.style={star,star points=5,inner sep=0pt,minimum size=8pt, fill=blue},
peak/.style={regular polygon,regular polygon sides=3,inner sep=0pt,minimum size=6pt, fill=blue}
valley/.style={regular polygon,regular polygon sides=3,inner sep=0pt,minimum size=6pt, fill=green, rotate=180}]

\node(1) [reg] at (1,2/2) {}; 
\node(2) [des] at (2.5,3/2) {};
\node(3) [reg] at (4,2/2) {}; 
\node(4) [des] at (5.5,1/2) {};
\node(5) [reg] at (7,2/2) {}; 
\node(6) [reg] at (8.5,3/2) {};
\node(7) [reg] at (10,4/2) {}; 
\node(8) [reg] at (11.5,5/2) {};
\draw[dashed] (1)--(2)--(3)--(4)--(5)--(6)--(7)--(8);
\end{tikzpicture}
\caption{For $I=\{2,4\}$, $S_I=\{2,3\}$ as seen above.}
\end{figure}

\begin{cor}\label{cor:um}For any admissible set $I$, $\displaystyle p(I,n)=\sum_{J\subset I} (-1)^{|I|-|J|}d(S_J,n).$
\end{cor}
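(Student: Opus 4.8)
The plan is to recognize this corollary as a straightforward instance of Möbius inversion on the Boolean lattice, with Theorem \ref{thm:dtop} supplying the ``forward'' direction of that inversion and the construction of $S_J$ fixing the summation range.

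First I would specialize Theorem \ref{thm:dtop} to the descent set $S_J$ for an arbitrary subset $J \subset I$. By the defining property of $S_J$ we have $\Spike(S_J) = J$, so the theorem reads
$$d(S_J, n) = \sum_{K \subset \Spike(S_J)} p(K, n) = \sum_{K \subset J} p(K, n).$$
The key point is that the summation range is now controlled entirely by $J$: each subset of $J$ appears exactly once. Before inverting, I would also note that every peak polynomial occurring is legitimate: since $I$ is admissible and $K \subset J \subset I$, each $K$ is again admissible (the conditions $1 \notin K$ and ``no two consecutive elements'' are inherited by subsets), so $p(K,n)$ is defined and the identity above is an equality of honest polynomials rather than a purely formal statement.

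Finally I would apply Möbius inversion on the subset lattice $2^I$. Writing $g(J) = d(S_J, n)$ and $f(K) = p(K, n)$, the displayed identity says $g(J) = \sum_{K \subset J} f(K)$ for every $J \subset I$. Since the Möbius function of the Boolean lattice is $\mu(K,J) = (-1)^{|J| - |K|}$, inversion yields $f(I) = \sum_{J \subset I} (-1)^{|I| - |J|} g(J)$, which is exactly
$$p(I, n) = \sum_{J \subset I} (-1)^{|I| - |J|} d(S_J, n).$$

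I do not expect a genuine obstacle here. The only facts requiring verification are the identity $\Spike(S_J) = J$, which is guaranteed by the construction of $S_J$ as the alternating peak/valley descent set, and the closure of admissibility under passing to subsets. The real content of the corollary is carried by Theorem \ref{thm:dtop}; what remains is the standard inclusion--exclusion bookkeeping, so I would keep the write-up short rather than belaboring the Möbius computation.
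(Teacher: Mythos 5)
Your proposal is correct and is essentially the argument the paper gives: both hinge on specializing Theorem \ref{thm:dtop} to $S_J$ using $\Spike(S_J)=J$, which yields the triangular system $d(S_J,n)=\sum_{K\subset J}p(K,n)$, and then inverting it over the Boolean lattice. The only difference is presentational --- the paper carries out the inversion by hand via induction on $|I|$ and an alternating binomial sum, whereas you invoke M\"obius inversion directly; your added remarks that $\Spike(S_J)=J$ holds by construction and that admissibility passes to subsets are exactly the right points to verify.
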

\begin{proof} The formula holds for $I=\varnothing$ as $S_\varnothing=\varnothing$ and $p(\varnothing,n)=d(\varnothing,n)=1$ for all $n$. Assume inductively that the formula holds for all admissible sets with less than $k$ elements. Let $|I|=k$. By Theorem $\ref{thm:dtop}$,
\begin{eqnarray*}
 d(S_I,n)&=& \sum_{J\subset I} p(I,n)= p(I,n)-\sum_{{J\subsetneq I }} \sum_{H\subset J}(-1)^{|J|-|H|}d(S_H, n),\\
 p(I,n)&=&d(S_I,n)- \sum_{{H\subsetneq I }} d(S_H, n) \sum_{t\leq {|I|-|H|-1}} {{|I|-|H|}\choose{t}} {(-1)}^t= d(S_I,n)- \sum_{H\subsetneq I } d(S_H, n) (-(-1)^{|I|-|H|}).
\end{eqnarray*} \end{proof}

If we consider our running example $I=\{2,4\}$, we get the following formulas from Theorem \ref{thm:dtop} and Corollary \ref{cor:um} respectively:
\begin{eqnarray}
d(\{2,3\},n)&=& p(\{2,4\},n)+p(\{2\},n)+p(\{4\},n)+p(\varnothing,n), \\
p(\{2,4\},n)&=&d(\{2,3\},n)-d(\{1\},n)-d(\{1,2,3\},n)+d(\varnothing,n).
\end{eqnarray}

\section{A combinatorial expression for peak coefficients}\label{sec:peak}

We start with defining an operation on permutations that 'flips' the orders of some initial coordinates.

\begin{defn} Let $\sigma\in S_n$. Let $i\leq n$, and $\init{i}{\sigma}=\{a_1<a_2<\cdots<a_i\}$. We define the involution $\flip_i$ as follows:
$$\flip_i(\sigma)_j= \begin{cases} 
      a_{i-k+1} & j\leq i, \sigma_j=a_k \\
      \sigma_{j} & j>i.\\
   \end{cases}
$$
\end{defn}
\begin{figure}[h] 
\begin{tabular}{c c c}
 \begin{tabular}{|c|}  \hline 
\begin{tikzpicture}[node distance=.1mm and .1mm,
reg/.style={circle,inner sep=0pt,minimum size=2pt,fill=black},
des/.style={star,star points=5,inner sep=0pt,minimum size=8pt, fill=red},
peak/.style={star,star points=5,inner sep=0pt,minimum size=8pt, fill=blue},
valley/.style={regular polygon,regular polygon sides=3,inner sep=0pt,minimum size=6pt, fill=green, rotate=180}]
\node(1) [reg] at (.8,.4) {}; 
\node [above=of 1] {2};
\node(2) [peak] at (1.6,.8) {};
\node [above=of 2] {4};
\node(3) [reg] at (2.7,.6) {}; 
\node [above=of 3] {3};
\node(4) [des] at (3.6,.2) {};
\node [above=of 4] {1};
\node(5) [reg] at (4.5,1) {}; 
\node [above=of 5] {5};
\node(6) [reg] at (5.6,1.2) {};
\node [above=of 6] {6};
\node(7) [reg] at (6.3,1.4) {}; 
\node [above=of 7] {7};
\node(8) [reg] at (7.2,1.6) {};
\node [above=of 8] {8};
\draw[dashed] (1)--(2)--(3)--(4)--(5)--(6)--(7)--(8);
\end{tikzpicture}\\
\hline
\end{tabular}
     &\raisebox{.2cm}{\begin{tabular}{c}
       \color{blue} $\flip_2$\\
      \color{blue} 
    $ \mathbf{ \longrightarrow}$  
          \end{tabular} }&
          
 \begin{tabular}{|c|}  \hline 
\begin{tikzpicture}[node distance=.1mm and .1mm,
reg/.style={circle,inner sep=0pt,minimum size=2pt,fill=black},
des/.style={star,star points=5,inner sep=0pt,minimum size=8pt, fill=red},
peak/.style={star,star points=5,inner sep=0pt,minimum size=8pt, fill=blue},
valley/.style={regular polygon,regular polygon sides=3,inner sep=0pt,minimum size=6pt, fill=green, rotate=180}]
\node(1) [reg] at (.8,.8) {}; 
\node [above=of 1] {4};
\node(2) [peak] at (1.6,.4) {};
\node [above=of 2] {2};
\node(3) [reg] at (2.7,.6) {}; 
\node [above=of 3] {3};
\node(4) [des] at (3.6,.2) {};
\node [above=of 4] {1};
\node(5) [reg] at (4.5,1) {}; 
\node [above=of 5] {5};
\node(6) [reg] at (5.6,1.2) {};
\node [above=of 6] {6};
\node(7) [reg] at (6.3,1.4) {}; 
\node [above=of 7] {7};
\node(8) [reg] at (7.2,1.6) {};
\node [above=of 8] {8};
\draw[dashed] (1)--(2)--(3)--(4)--(5)--(6)--(7)--(8);
\end{tikzpicture}\\
\hline
\end{tabular} \\
   \begin{tabular}{c c}
   &\\
    \color{red}$\big\downarrow$ & \color{red} $\flip_4$\\
    &
   \end{tabular}  & &    \begin{tabular}{c c}
   &\\
    \color{red}$\big\downarrow$ & \color{red} $\flip_4$\\
    &
   \end{tabular}\\
         
 \begin{tabular}{|c|}  \hline 
\begin{tikzpicture}[node distance=.1mm and .1mm,
reg/.style={circle,inner sep=0pt,minimum size=2pt,fill=black},
des/.style={star,star points=5,inner sep=0pt,minimum size=8pt, fill=red},
peak/.style={star,star points=5,inner sep=0pt,minimum size=8pt, fill=blue},
valley/.style={regular polygon,regular polygon sides=3,inner sep=0pt,minimum size=6pt, fill=green, rotate=180}]
\node(1) [reg] at (.8,.6) {}; 
\node [above=of 1] {3};
\node(2) [peak] at (1.6,.2) {};
\node [above=of 2] {1};
\node(3) [reg] at (2.7,.4) {}; 
\node [above=of 3] {2};
\node(4) [des] at (3.6,.8) {};
\node [above=of 4] {4};
\node(5) [reg] at (4.5,1) {}; 
\node [above=of 5] {5};
\node(6) [reg] at (5.6,1.2) {};
\node [above=of 6] {6};
\node(7) [reg] at (6.3,1.4) {}; 
\node [above=of 7] {7};
\node(8) [reg] at (7.2,1.6) {};
\node [above=of 8] {8};
\draw[dashed] (1)--(2)--(3)--(4)--(5)--(6)--(7)--(8);
\end{tikzpicture}\\
\hline
\end{tabular}
     &\raisebox{.2cm}{\begin{tabular}{c}
       \color{blue} $\flip_2$\\
      \color{blue} 
    $ \mathbf{ \longrightarrow}$  
          \end{tabular} }&
          
 \begin{tabular}{|c|}  \hline 
\begin{tikzpicture}[node distance=.1mm and .1mm,
reg/.style={circle,inner sep=0pt,minimum size=2pt,fill=black},
des/.style={star,star points=5,inner sep=0pt,minimum size=8pt, fill=red},
peak/.style={star,star points=5,inner sep=0pt,minimum size=8pt, fill=blue},
valley/.style={regular polygon,regular polygon sides=3,inner sep=0pt,minimum size=6pt, fill=green, rotate=180}]
\node(1) [reg] at (.8,.2) {}; 
\node [above=of 1] {1};
\node(2) [peak] at (1.6,.6) {};
\node [above=of 2] {3};
\node(3) [reg] at (2.7,.4) {}; 
\node [above=of 3] {2};
\node(4) [des] at (3.6,.8) {};
\node [above=of 4] {4};
\node(5) [reg] at (4.5,1) {}; 
\node [above=of 5] {5};
\node(6) [reg] at (5.6,1.2) {};
\node [above=of 6] {6};
\node(7) [reg] at (6.3,1.4) {}; 
\node [above=of 7] {7};
\node(8) [reg] at (7.2,1.6) {};
\node [above=of 8] {8};
\draw[dashed] (1)--(2)--(3)--(4)--(5)--(6)--(7)--(8);
\end{tikzpicture}\\
\hline
\end{tabular}
\end{tabular}
\caption{Operations $\flip_2$ and $\flip_4$ on $\sigma=24315678$. \label{fig:flip}}
\end{figure}
See Figure \ref{fig:flip} for examples.  
\begin{rmk}
The involution $\flip_i$ satisfies the following:
\begin{itemize}
    \item $\init{i}{\flip_i(\sigma)}=\init{i}{\sigma}$.
    \item For $k<i$, $k$ is a descent of $\flip_i(\sigma)$ iff it is not a descent of $\sigma$. 
    \item For $k>i$, $k$ is a descent of $\flip_i(\sigma)$ iff it is a descent of $\sigma$. 
    \item $\flip_i$ exchanges all the peaks less than $i$ with valleys, and all the valleys less than $i$ with peaks.
\end{itemize}\end{rmk}
 In Figure \ref{fig:flip}, we see an instance of operations $\flip_2$ and $\flip_4$ commuting. Now we will prove that this is the case in general.
\begin{prop} For all $i,j$, $\flip_i$ and $\flip_j$ commute.\label{prop:com}
\end{prop}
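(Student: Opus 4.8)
The plan is to reduce everything to a statement about order-reversing bijections of value sets. Without loss of generality assume $i\leq j$; the case $i=j$ is immediate since $\flip_i$ is an involution, so take $i<j$. The first thing I would record is a reformulation of the definition: writing $A=\init{i}{\sigma}$ and letting $r_A$ denote the unique order-reversing bijection of the totally ordered set $A$, the definition says exactly that $\flip_i(\sigma)_k=r_A(\sigma_k)$ for $k\leq i$ and $\flip_i(\sigma)_k=\sigma_k$ for $k>i$. Since the first $i$ entries are permuted among themselves and the later entries are untouched, $\flip_i$ preserves $\init{j}{\cdot}$ for every $j\geq i$; in particular, writing $B=\init{j}{\sigma}$, both $\sigma$ and $\flip_i(\sigma)$ carry the same set $B$ of values in their first $j$ positions, so the outer flip in $\flip_j(\flip_i(\sigma))$ is governed by the reversal $r_B$.

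Next I would split the coordinates into the three ranges $k>j$, $i<k\leq j$, and $k\leq i$, and compare $\flip_j(\flip_i(\sigma))_k$ with $\flip_i(\flip_j(\sigma))_k$ in each. For $k>j$ both compositions fix coordinate $k$, giving $\sigma_k$. For $i<k\leq j$, in either order only the $\flip_j$-step acts (the $\flip_i$-step ignores coordinate $k$ since $k>i$), so both give $r_B(\sigma_k)$. These two ranges are routine.

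The only real content is the range $k\leq i$, and this is where I expect the main obstacle. Here the two orders of composition produce, respectively, $r_B(r_A(\sigma_k))$ and $r_{A'}(r_B(\sigma_k))$, where $A'=\init{i}{\flip_j(\sigma)}=r_B(A)$. The subtlety is precisely that the inner flip sees a different value set in the two orders --- the set $A$ on one side and its reflection $A'=r_B(A)$ on the other --- so commutativity is not formal. I would finish by proving the identity $r_B\circ r_A=r_{A'}\circ r_B$ as maps $A\to A'$. Both sides are bijections of $A$ onto $A'$, and each is a composite of two order-reversing maps ($r_A$ followed by $r_B|_A$ on the left, $r_B|_A$ followed by $r_{A'}$ on the right), hence order-preserving. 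Since between two finite totally ordered sets of equal size there is a unique order-preserving bijection, the two composites coincide. Substituting this back into the $k\leq i$ computation settles that range and completes the proof.
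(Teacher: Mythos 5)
Your proof is correct, and its skeleton --- comparing the two compositions coordinate by coordinate over the ranges $k>j$, $i<k\leq j$, $k\leq i$, after observing that $\flip_i$ preserves $\init{j}{\cdot}$ for $j\geq i$ --- matches the paper's. Where you genuinely diverge is in how the crucial range $k\leq i$ is settled. The paper first declares ``without loss of generality $\init{j}{\sigma}=[j]$'' so that $\flip_j$ becomes the explicit map $x\mapsto j+1-x$, and then verifies commutativity by index arithmetic on $\init{i}{\sigma}=\{a_1<\cdots<a_i\}$; this WLOG is left unjustified (it rests on the fact that the flips only depend on relative order of values, and it implicitly presumes $j=\max(i,j)$). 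You instead keep the value sets abstract, isolate the identity $r_B\circ r_A=r_{A'}\circ r_B$ with $A'=r_B(A)$, and dispatch it by noting both sides are order-preserving bijections between finite chains of equal size, hence equal by uniqueness. Your route buys a cleaner justification of exactly the point the paper waves at: no relabeling is needed, the asymmetry between $i$ and $j$ is handled up front by the explicit reduction to $i<j$, and the ``subtlety'' you flag --- that the inner flip sees $A$ in one order of composition and $r_B(A)$ in the other --- is precisely the content the paper's index computation encodes implicitly. The paper's version is shorter once the normalization is granted; yours is the one I would accept without further questions.
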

\begin{proof} Assume without loss of generality that $\init{j}{\sigma}=[j]$, so that $\flip_j(\sigma)_k=j-\sigma_k+1$. 

Put $\init{i}{\sigma}=\{a_1<a_2<\cdots<a_i\}$. 
$$\flip_i(\sigma)_j= \begin{cases} 
      a_{i-k+1} & if\sigma_j=a_t \text{ for some }t \\
      \sigma_{j} & \text{otherwise}.\\
   \end{cases}
$$
As $\init{i}{\flip_j(\sigma)}=\{j-a_i+1<j-a_{i-1}+1<\ldots<j-a_1+1\}$, we have
\begin{eqnarray*}
    (\flip_i \circ \flip_j (\sigma))_k&=&
    \begin{cases} j-a_{i-t+1}+1 & \text{ if } j- \sigma_k+1=j-a_t+1 \text{ for some $t$}\\
    j-\sigma_k+1  & \text{ otherwise} \end{cases} \\
    &=& \begin{cases}  j-a_{i-t+1}+1 & \text{ if } \sigma_k=a_t \text{ for some $t$}\\
    j-\sigma_k+1  & \text{ otherwise} \end{cases} \\
    &=&( \flip_j \circ \flip_i (\sigma))_k.
\end{eqnarray*}
\end{proof}

\begin{defn} For $i \in \Spike(\sigma)$, $\sigma$ is said to admit an $i^+$-flip if $\Spike(\flip_i(\sigma))=\Spike(\sigma)/\{i\}$. Similarly, it is said to admit an $i^-$-flip if $\Spike(\flip_{i-1}(\sigma))=\Spike(\sigma)/\{i\}$. We say $\sigma$ is admits an $i$-flip if it admits an $i^+$- or $i^-$- flip. For $\sigma$ that admits and $i$-flip, we set
\begin{equation*}
\Psi_i(\sigma)= 
\begin{cases} \flip_i(\sigma) & \sigma \text{ admits an $i^+$-flip,}\\
\flip_{i-1}(\sigma) & \text{ otherwise.}
\end{cases}
\end{equation*}
\end{defn}

Visually, this means that $\flip_i$ or $\flip_{i-1}$ straightens out the peak or valley point at $i$. For example, $24315678$ from Figure \ref{fig:flip} admits a $4$-flip, but not a $2$-flip.

\begin{prop}For all $i,j$ such that $|i-j|>1$, $\sigma$ admits an $i^+$-flip if and only if $\Psi_j(\sigma)$ admits an $i^+$-flip. Similarly, $\sigma$ admits an $i^-$-flip if and only if $\Psi_j(\sigma)$ admits an $i^-$-flip.
\end{prop}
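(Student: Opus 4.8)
The plan is to collapse the biconditional into a single forward implication and then cash in the fact that every flip is an involution. Write $\Psi_j=\flip_m$ with $m\in\{j-1,j\}$, and let $\flip_\ell$ (so $\ell=i$ in the $i^+$-case and $\ell=i-1$ in the $i^-$-case) be the map realizing the flip at $i$. By Proposition \ref{prop:com} these commute, $\flip_\ell\flip_m=\flip_m\flip_\ell$. If I prove that for \emph{every} permutation and every $m$ with $|i-j|>1$ the statement ``$\sigma$ admits an $i^+$-flip'' implies ``$\flip_m(\sigma)$ admits an $i^+$-flip'', then feeding $\flip_m(\sigma)$ back into the same implication and using $\flip_m^2=\mathrm{id}$ recovers the reverse implication; the $i^-$-case is identical. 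So it suffices to treat the forward direction.

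First I would reformulate the flip conditions locally. Writing $d_k(\pi)\in\{0,1\}$ for the descent indicator at $k$, membership $i\in\Spike(\sigma)$ means $d_{i-1}(\sigma)\ne d_i(\sigma)$. A one-line computation then shows that $\Spike(\flip_i(\sigma))=\Spike(\sigma)\setminus\{i\}$ is equivalent to the single identity $d_i(\flip_i(\sigma))=d_i(\sigma)$: once the descent at $i$ is preserved, the equalities forcing the spike at $i+1$ to survive become automatic. Symmetrically, an $i^-$-flip is equivalent to $i\in\Spike(\sigma)$ together with $d_{i-1}(\flip_{i-1}(\sigma))=d_i(\sigma)$, with the spike at $i-1$ surviving for free. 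The gain is that each condition is determined by the set $\init{\ell}{\sigma}$ together with the three entries $\sigma_{i-1},\sigma_i,\sigma_{i+1}$, and in particular the $i^-$-condition never involves $d_{i-2}$.

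The second ingredient is a \emph{window} lemma for one $\flip_m$: by the properties recorded in the Remark, $\flip_m$ flips every descent below $m$ and fixes every descent above $m$, so comparing consecutive descents shows $\Spike(\flip_m(\pi))$ agrees with $\Spike(\pi)$ outside $\{m,m+1\}$, while the memberships of $m$ and $m+1$ depend only on the tuple $\bigl(\init{m}{\pi},\pi_m,\pi_{m+1},d_{m-1}(\pi),d_{m+1}(\pi)\bigr)$. Since $|i-j|>1$ forces $i\notin\{m,m+1\}$, a spike at $i$ passes through $\flip_m$ untouched. Now split on $j$. If $j<i$ then $m\le i-2$, so $\flip_m$ only reorders an initial segment strictly shorter than $\ell$; it therefore fixes $\init{\ell}{\sigma}$ as a set and fixes all entries and descents at positions $\ge i-1$, so the reformulated flip condition is literally unchanged and the full biconditional is immediate. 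If $j>i$ then $m\ge i+1$, so $\ell\le m-1$ and $\flip_\ell$ (acting only on the first $\ell$ places and on descents below $\ell$) fixes the entire control tuple above; hence $\flip_m$ imposes the \emph{same} memberships on $m,m+1$ whether applied to $\sigma$ or to $\flip_\ell(\sigma)$. Because these toggles live in $\{m,m+1\}\not\ni i$, the identity $\Spike(\flip_\ell(\sigma))=\Spike(\sigma)\setminus\{i\}$ transports verbatim through $\flip_m$, giving $\Spike(\flip_\ell\flip_m(\sigma))=\Spike(\flip_m(\sigma))\setminus\{i\}$, i.e. $\flip_m(\sigma)$ admits the flip.

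The one genuine obstacle is the boundary case $m=i+1$ of the $i^+$-case (that is, $j=i+2$ with $\Psi_j=\flip_{i+1}$). There the control tuple for $\flip_{i+1}$ contains $d_{m-1}=d_i$, exactly the descent that $\flip_i$ is allowed to move, so ``$\flip_\ell$ fixes the control data'' can fail a priori and $\flip_m$ could act differently on $\sigma$ and on $\flip_i(\sigma)$. This is precisely neutralized by the reformulation: the hypothesis that $\sigma$ admits an $i^+$-flip \emph{is} the identity $d_i(\flip_i(\sigma))=d_i(\sigma)$, so the disputed datum agrees after all and the transport goes through. The mirror boundary that one might fear in the $i^-$-case (where $d_{i-2}$ could matter) simply never occurs, since the $i^-$-condition does not see $d_{i-2}$ and, for $m\ge i+1$, one has $m-1\ge i>i-1=\ell$ anyway.
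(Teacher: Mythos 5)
Your proof is correct and follows the same basic strategy as the paper: the admission condition at $i$ is local (so the case $j<i$ is immediate), and the case $j>i$ is handled via the commutativity of the flips from Proposition \ref{prop:com}. Your write-up is in fact more careful than the paper's, which disposes of the $j>i$ case with a bare appeal to commutativity; your reformulation of the $i^{\pm}$-flip conditions as single descent identities and your explicit treatment of the boundary case $m=i+1$ (where $\flip_i$ could a priori disturb the data controlling $\flip_m$) supply exactly the details the paper leaves implicit.
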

\begin{proof} Assume $j<i$. Note that whether a permutation $\sigma$ with a spike at $i$ admits an $i^+$ or $i^-$-flip only depends on the images of $i-1,i$ and $i+1$ in $\flip_i(\sigma)$ and $\flip_{i-1}(\sigma)$. As $\flip_j$ and $\flip_{j-1}$ do not alter these images, $\Psi_j$ does not change whether $\sigma$ admits an $i^+$ or $i^-$ flip. The case $j<i$ follows by Proposition \ref{prop:com}.
\end{proof}

For any admissible set $I=\{i_1,i_2,\ldots,i_k\}$, we put  $\Psi_{I}:=\Psi_{i_1}\circ\Psi_{i_2}\circ\cdots \circ\Psi_{i_k}$. This operation is well-defined by Proposition \ref{prop:com}, and the corollary above.

\begin{lem}\label{lem:flip} For any admissible set $I$ and any $J\subset I$, $\Psi_{J}$ induces a bijection between elements of $D(S_I,n)$ that admit a $j$-flip for all $j \in J$ and elements of $D(S_{I/J},n)$. In particular, for any $m\geq \mathrm{max}(I)$ we have:
\begin{equation*}
   \init{m}{\sigma} \cap [m + 1, 2m] = [m + 1, m + k] \Longleftrightarrow    \init{m}{\Psi_{J}(\sigma)} \cap [m + 1, 2m] = [m + 1, m + k].
\end{equation*}
\end{lem}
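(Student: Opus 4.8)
The plan is to separate the easy ``in particular'' clause from the genuine content, which is the bijection, and then to prove the bijection by reducing to the removal of a single spike and inducting on $|J|$.

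The preservation of $\init{m}{\sigma}\cap[m+1,2m]$ I would dispatch first, directly from the structure of $\Psi_J$. Every $\Psi_j$ equals $\flip_j$ or $\flip_{j-1}$, and by the first item of the Remark each $\flip_\ell$ satisfies $\init{\ell}{\flip_\ell(\sigma)}=\init{\ell}{\sigma}$; since $\flip_\ell$ also fixes every coordinate after position $\ell$, it fixes the whole set $\init{m}{\sigma}$ whenever $\ell\le m$. As $J\subset I$ and $\mathrm{max}(I)\le m$, every flip appearing in $\Psi_J$ has index $\ell\le\mathrm{max}(I)\le m$, so $\init{m}{\Psi_J(\sigma)}=\init{m}{\sigma}$ as sets and in particular the intersection with $[m+1,2m]$ is unchanged. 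It then remains to prove the bijection.

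For the bijection I would induct on $|J|$, the case $J=\varnothing$ being trivial. Writing $J=J'\cup\{i\}$, it suffices to prove the single-spike statement: for any admissible $K$ with $i\in K$, the map $\Psi_i$ restricts to a bijection from $\{\sigma\in D(S_K,n)\colon \sigma \text{ admits an } i\text{-flip}\}$ onto $D(S_{K/\{i\}},n)$. The composition then goes through: $K/\{i\}$ is again admissible, and since $J$ is admissible all its distinct elements differ by more than $1$, so by the Proposition preceding this lemma $\Psi_i$ preserves the property of admitting a $j$-flip for each remaining $j\in J/\{i\}$; hence after one removal the image still lies in $D(S_{K/\{i\}},n)$ and still admits all the remaining flips, and commutativity (Proposition~\ref{prop:com}) makes the order irrelevant. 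The heart of the matter is therefore the single-spike claim. To show $\Psi_i$ maps into $D(S_{K/\{i\}},n)$, note that admitting an $i$-flip gives $\Spike(\Psi_i(\sigma))=K/\{i\}$ by definition, so $\Des(\Psi_i(\sigma))$ is one of the two descent sets with this spike set; I must exclude the non-canonical one by verifying that no descent survives weakly to the right of the rightmost spike of $K/\{i\}$. Using that $S_K$ is the union of the downhill intervals running from each peak to the following valley, and that $\flip_i$ (resp.\ $\flip_{i-1}$) complements exactly the descents in positions $<i$ (resp.\ $<i-1$) while leaving those in positions $>i$ (resp.\ $\ge i$) untouched, the hypothesis that the spike at $i$ vanishes forces the correct monotonicity across position $i$ and pins the descent set to the canonical $S_{K/\{i\}}$. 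The cases $i=\mathrm{max}(K)$ (removing the rightmost valley) and $i<\mathrm{max}(K)$ (removing an interior spike, which flips the peak/valley types of all spikes to its left, matching the re-alternation that defines $S_{K/\{i\}}$) are treated separately but identically in spirit.

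For injectivity and surjectivity at once I would build the inverse $\Theta_i\colon D(S_{K/\{i\}},n)\to D(S_K,n)$ that reinserts a spike at $i$: given $\tau$, which is monotone near $i$ because $i-1,i,i+1\notin K/\{i\}$, exactly one of $\flip_i(\tau),\flip_{i-1}(\tau)$ creates a spike at $i$ of the type prescribed by $K$ and restores the canonical descent set $S_K$, again by the local boundary analysis. Checking that the flip $\Theta_i$ uses to insert the spike is the same flip $\Psi_i$ used to remove it then yields $\Theta_i\circ\Psi_i=\mathrm{id}$ and $\Psi_i\circ\Theta_i=\mathrm{id}$, each $\flip_\ell$ being an involution, which finishes the single-spike bijection and hence the lemma. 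I expect the main obstacle to be exactly this boundary bookkeeping at position $i$ (or $i-1$), the one coordinate whose descent status the Remark does not record: verifying in each case that ``the spike at $i$ disappears'' is equivalent to that single ambiguous descent behaving correctly, and that the inserting and removing flips coincide so that $\Psi_i$ and $\Theta_i$ are genuinely mutually inverse.
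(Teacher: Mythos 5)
Your proposal follows essentially the same route as the paper: the forward direction pins down the descent set of $\Psi_J(\sigma)$ among the two candidates with spike set $I/J$ via the rightmost-spike condition, the converse reinserts each spike by applying whichever of $\flip_j,\flip_{j-1}$ recreates it (this is exactly the paper's case analysis at positions $j-2,j-1,j,j+1$), and the ``in particular'' clause follows from $\init{m}{\flip_\ell(\sigma)}=\init{m}{\sigma}$ for $\ell\leq m$. The only real difference is that you make injectivity explicit by exhibiting the two-sided inverse $\Theta_i$, a point the paper's proof leaves implicit in its surjectivity construction.
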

\begin{proof} Assume $\sigma \in D(S_I,n)$ admits a $j$-flip for each $j\in J$. Then $\Psi_{J}(\sigma)$ has spike set $I/J$ by definition. As $\mathrm{max}(\Des( \Psi_{J}(\sigma)))\leq \mathrm{max}(I/J)$, its descent set is the set $S_{I/J}$. 

For the converse, let $\rho \in D(S_{I/J},n)$, and $j\in J$. If $\flip_j(\rho)$ has a spike point at $j$, then $\flip_j(\rho) \in D(S_{I/J \cup \{j\}},n)$. If, on the other hand $\flip_j(\rho)$ has a spike point at $j$, note that as $I$ is an admissible set, neither $j-1$ nor $j+1$ are spike points, so $j,j-1$ and $j-2$ are either all descents or all not descents. Assume without generality that all are descents, the other case being symmetrical. Then, as $\flip_j(\rho)$ has no spike at $j$, we have $(\flip_j(\rho))_{j-2}<(\flip_j(\rho))_{j-1}<(\flip_j(\rho))_{j}<\rho_{j+1}<\rho_{j}$. As $(\flip_j(\rho))_{j-1}<\rho_j$, $(\flip_{j-1}(\rho))_{j-2}=(\flip_{j-1}(\rho)_{j-1})<(\flip{j}(\rho))_{j-1}<\rho_{j}$, $\flip_{j-1}(\rho)$ has spikes at $I/J \cup {j}$. It also admits a $j^-$-flip but not a $j^+$ flip. 

Doing $\flip_j$ if $\flip_j(\rho)$ has a spike point at $j$, and $\flip_{j-1}$ otherwise gives an element $\sigma^{(j)}$ of $D(S_{I/J\cup\{j\}},n)$ with $\Psi_j(\sigma^{(j)})=\rho$. So, by Lemma \ref{lem:flip}, we have an element $\sigma \in D(S_I,n)$ with $\Psi_J(\sigma))=\rho$. The second part follows as $\sigma|_m=\flip_i(\sigma)|_m$ for any $i\leq n$.

\end{proof}

\begin{thm} For any admissible set of $I$ with $\mathrm{max}(I)\leq m$ we have
\begin{equation}
   \displaystyle p(I,n)=b_0(I){n-m\choose0}+b_1(I){n-m\choose1}+\cdots+b_m(I){n-m\choose m},
\end{equation}
where the constant $b_k(I)$ is the number of $\sigma \in D(S_I,2m)$ such that:
\begin{equation*}
   \init{m}{\sigma} \cap [m + 1, 2m] = [m + 1, m + k],
\end{equation*}  and $\sigma$ does not admit any $i$-flips.
\end{thm}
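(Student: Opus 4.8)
The plan is to read off the expansion directly from the two formulas already established---the inclusion--exclusion of Corollary~\ref{cor:um}, which writes $p(I,n)$ as an alternating sum of descent polynomials, and the binomial expansion of $d(S,n)$ from Theorem~\ref{thm:des}---and then to identify the resulting coefficients combinatorially by means of the flip bijection of Lemma~\ref{lem:flip}.

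First I would substitute Theorem~\ref{thm:des} into Corollary~\ref{cor:um}. For every nonempty $J\subset I$ the rightmost spike of $S_J$ is a valley, so the permutation is increasing to its right and no descent occurs there; hence $\max(S_J)=\max(J)-1\leq m-1<m$, and the empty set is trivial. Thus a single value of $m$ is legitimate for all terms simultaneously, and each $a_k(S_J)$ is taken with respect to this common $m$. This gives
\begin{equation*}
p(I,n)=\sum_{J\subset I}(-1)^{|I|-|J|}\sum_{k=0}^{m}a_k(S_J)\binom{n-m}{k}.
\end{equation*}
Interchanging the two finite sums, the coefficient of $\binom{n-m}{k}$ becomes $\sum_{J\subset I}(-1)^{|I|-|J|}a_k(S_J)$, so it remains only to prove that this alternating sum equals $b_k(I)$.

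To do this I would run inclusion--exclusion over the universe $U$ of those $\sigma\in D(S_I,2m)$ satisfying the fixed intersection condition $\init{m}{\sigma}\cap[m+1,2m]=[m+1,m+k]$, introducing one ``bad event'' for each $i\in I$, namely that $\sigma$ admits an $i$-flip; write $F(\sigma)=\{i\in I:\sigma\text{ admits an }i\text{-flip}\}$, so that $b_k(I)$ counts the $\sigma\in U$ with $F(\sigma)=\varnothing$. For a subset $J\subset I$, Lemma~\ref{lem:flip} applied at $n=2m$ (where it also preserves the intersection condition) shows that $\Psi_J$ is a bijection from the elements of $U$ admitting a $j$-flip for every $j\in J$ onto the elements of $D(S_{I\setminus J},2m)$ satisfying the same intersection condition; hence the number of $\sigma\in U$ for which all the events indexed by $J$ occur is exactly $a_k(S_{I\setminus J})$. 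Standard inclusion--exclusion then yields
\begin{equation*}
b_k(I)=\sum_{J\subset I}(-1)^{|J|}a_k(S_{I\setminus J})=\sum_{J'\subset I}(-1)^{|I|-|J'|}a_k(S_{J'}),
\end{equation*}
the final equality by the reindexing $J'=I\setminus J$, and this matches the coefficient computed above.

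The genuinely routine steps---interchanging the two sums and the reindexing---are harmless. The step demanding the most care, and where I expect the real content to sit, is the verification that Lemma~\ref{lem:flip} supplies \emph{precisely} the ``at least $J$'' counts required by inclusion--exclusion. Concretely, one must check that ``$\sigma$ admits a $j$-flip for every $j\in J$'' really is the event $J\subset F(\sigma)$, that the flip operations for distinct indices interact compatibly so that $\Psi_J$ is well defined and counts correctly (this is what Proposition~\ref{prop:com} together with the stability of flip-admissibility under each $\Psi_j$ guarantees), and that the intersection condition is transported unchanged by $\Psi_J$ so that the entire argument can be carried out coefficient-by-coefficient at the single value $n=2m$.
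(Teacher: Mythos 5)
Your proposal is correct and follows essentially the same route as the paper: substitute the binomial expansion of Theorem~\ref{thm:des} into the alternating sum of Corollary~\ref{cor:um}, then use the bijection of Lemma~\ref{lem:flip} (which preserves the intersection condition at $n=2m$) to identify the ``admits a $j$-flip for all $j\in J$'' counts with $a_k(S_{I\setminus J})$ and finish by inclusion--exclusion. If anything, your write-up is cleaner than the paper's, which states the same argument tersely and with some notational slippage between $a_k$, $b_k$, and the sets $B_J$.
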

\begin{proof} Let $I$ be an admissible set with max $m$.  Note that for any $J\subset I$, $\mathrm{max}(S_J)\leq \mathrm{max}(J)-1\leq m$. Fix $k\leq n$.

For any $J\subset I$, we use the notation $B_{I/J}$ to denote the set of $\sigma \in D(S,2m)$ such that:
$ \init{m}{\sigma} \cap [m + 1, 2m] = [m + 1, m + k]$ and $\sigma$ admits $j$ flips for all $j \in J$. 

Recall from Corollary \ref{cor:um} that we have: 
\begin{eqnarray}
p(I,n)=\sum_{J\subset I} (-1)^{|I|-|J|}d(S_J,n).
\end{eqnarray}
Combining this with Theorem \ref{thm:des} we get:
\begin{eqnarray*}
a_k(I)&=& \sum_{J\subset I} (-1)^{|I|-|J|}b_k(J)\\
&=& \sum_{J\subset I} (-1)^{|I|-|J|} |B_{J}|\\
&=&\displaystyle  b_k(I) -|\cup_J B_J|
\end{eqnarray*}
by Lemma \ref{lem:flip} and the inclusion-exclusion principle.
\end{proof}
\begin{table}[ht]
    \centering
\begin{tabular}{c c c c}
\begin{tabular}{| c|c c|}
\hline
 & $2$-flip & $4$-flip\\
\hline
14325678 &\xmark&\cmark   \\
24315678&\xmark&\cmark\\
34215678&\cmark&\cmark\\
&&\\
&&\\
&&\\
&&\\
&&\\
\hline
\end{tabular}     & 
\begin{tabular}{| c|c c|}
\hline
 & $2$-flip & $4$-flip\\
\hline
15324678 &\xmark&\cmark   \\
 15423678&\xmark&\xmark\\
 25314678&\xmark&\xmark\\
25413678&\xmark&\xmark\\
 35214678&\cmark&\xmark\\
 35412678&\xmark&\xmark\\
45213678&\cmark&\xmark\\
45312678&\cmark&\xmark\\
\hline
\end{tabular}&
\begin{tabular}{|c|c c|}
\hline
 & $2$-flip & $4$-flip\\
\hline
  16523478 &\xmark&\xmark   \\
 26513478&\xmark&\xmark\\
 36512478&\xmark&\xmark\\
46512378&\xmark&\xmark\\
 56213478&\cmark&\xmark\\
 56312478&\cmark&\xmark\\
56412378&\cmark&\xmark\\
&&\\
\hline
\end{tabular}&
\begin{tabular}{| c|c c|}
\hline
 & $2$-flip & $4$-flip\\
\hline
      57612348&\xmark& \xmark  \\
675123478&\cmark&\xmark\\
&&\\
&&\\
 &&\\
&&\\
&&\\
&&\\
\hline
\end{tabular}\\
\centering $k=0$ & $k=1$& $k=2$&  $k=3$\\
&&&
\end{tabular}

    \caption{The elements $\sigma \in D({\{2,3\}},8)$ satisfying    $\init{4}{\sigma} \cap [5, 8] = [5, 4 + k]$. \label{tab:1}}
    
\end{table}

We will end this section by calculating the expansion of $p(\{2,4\},n)$. Recall that $S_{\{2,4\}}=\{2,3\}$. For all elements $\sigma \in  D({\{2,3\}},8)$ satisfying  $\init{4}{\sigma} \cap [5, 8] = [5, 4+ k]$ for some $k$ we need to check if $\sigma$ admits a $2$-flip or a $4$-flip. Checking for $2$-flips is very straightforward, we just need to check whether $\sigma_1>\sigma_3$. $4$-flips are slightly more tricky as $\flip_4$ does not simply exchange a pair of coordinates, and we actually need to calculate $\flip_4(\sigma)$ to see if $\flip_4(\sigma)_3$ is smaller than $\sigma_5$. For each $k$, the related permutations $\sigma$ can be found in Table \ref{tab:1}, along with the information on whether they admit $2$ or $4$-flips.

Counting the elements that admit neither $2$ nor $4$-flips from Table \ref{tab:1} gives us the following formula:
\begin{equation*}
  p(\{2,4\},n)=0{n-4\choose0}+4{n-4\choose1}+4{n-4\choose2}+1{n-4\choose 3}.    
\end{equation*}
In fact, the inclusion-exclusion principle allows us to read the coefficients for $p({2,n})$(ones that admit only $4$-flips), $p({4,n})$(ones that admit only $2$-flips) and $p(\varnothing,n)$(ones that admit only $4$-flips) from Table $\ref{tab:1}$:
\begin{eqnarray*}
p(\{2\},n)&=&2{n-4\choose0}+1{n-4\choose1}+0{n-4\choose2}+0{n-4\choose 3},   \\
p(\{4\},n)&=&0{n-4\choose0}+3{n-4\choose1}+3{n-4\choose2}+1{n-4\choose 3},   \\
p(\varnothing,n)&=&1{n-4\choose0}+0{n-4\choose1}+0{n-4\choose2}+0{n-4\choose 3}.   
\end{eqnarray*}
Note that $  p(\{2,4\},n)+p(\{2\},n)+p(\{4\},n)+p(\varnothing,n)=d(\{2,3\},n)$ as required.
\section{Acknowledgements}
The author would like to thank Mohamed Omar for an inspiring seminar talk on the subject. The author is also immensely grateful to Alexander Diaz-Lopez and Erik Insko for spotting an error with the initial statement of the main result, and their many helpful suggestions and comments in the following discussion. This work was partially supported by the USC Graduate School Final Year Fellowship.

\bibliographystyle{plain}
\bibliography{main}

\end{document}